\numberwithin{equation}{section}
\newtheorem{thm}{Theorem}[section]
\newtheorem{lma}[thm]{Lemma}
\renewcommand{\epsilon}{\varepsilon}
\newcommand{\conp}{\textup{con}^+(\mathbb{D}^n)}
\renewcommand{\geq}{\geqslant}
\renewcommand{\leq}{\leqslant}
\newcommand{\ubd}{\overline{\dim}_{\textup{B}}}
\newcommand{\hd}{\dim_{\textup{H}}}
\newcommand{\ad}{\dim_{\textup{A}}}
\title{The Poincar\'e exponent \\and the \\ dimensions of Kleinian limit sets}
\author{Jonathan M. Fraser\\ \\
 U\MakeLowercase{niversity of} S\MakeLowercase{t} A\MakeLowercase{ndrews}, S\MakeLowercase{cotland} \\
\MakeLowercase{Email: jmf32@st-andrews.ac.uk}}
\thanks{The  author was  financially supported by an \emph{EPSRC Standard Grant} (EP/R015104/1) and a  \emph{Leverhulme Trust Research Project Grant} (RPG-2019-034). He thanks Amlan Banaji and Liam Stuart for helpful comments.}
\begin{document}

%\date{}

\maketitle
\thispagestyle{empty}

\begin{abstract}
We provide a proof of the (well-known) result that the Poincar\'e exponent of a non-elementary Kleinian group is a lower bound for the upper box dimension of the limit set. Our proof only uses elementary hyperbolic and fractal geometry.
\\ \\ 
\emph{Mathematics Subject Classification} 2010: primary: 28A80, 30F40.
\\
\emph{Key words and phrases}: Poincar\'e exponent, Kleinian group, upper box dimension, limit set.
\end{abstract}

\section{Kleinian groups, limit sets, and the Poincar\'e exponent}

For integers $n \geq 2$, $n$-dimensional hyperbolic space can be modelled by the Poincar\'e ball 
\[
\mathbb{D}^n = \{ z \in \mathbb{R}^n: |z| <1\}
\]
equipped with the hyperbolic metric $d_H$ given by
\[
|ds| = \frac{2|dz|}{1-|z|^2}.
\]
The group of orientation preserving isometries of $(\mathbb{D}^n, d_H)$ is the group of conformal automorphisms of $\mathbb{D}^n$, which we denote by $\conp$.  A good way to get a handle on this group is to view it as the (orientation preserving) stabliser of  $\mathbb{D}^n$ as a subgroup of the M\"obius group acting on $\mathbb{R}^n \cup \{\infty\}$.  This group consists of maps given by the composition of reflections in spheres.  

A group $\Gamma \leq  \conp$ is called \emph{Kleinian} if it is discrete.  Kleinian groups generate beautiful fractal limit sets defined by
\[
L(\Gamma) = \overline{\Gamma(0)} \setminus \Gamma(0)
\]
where $\Gamma(0) = \{ g(0) : g \in \Gamma\}$ is the orbit of 0 under $\Gamma$ and the closure is the Euclidean closure. Discreteness of $\Gamma$ implies that all $\Gamma$-orbits are locally finite in $\mathbb{D}^n$ and this ensures that $L(\Gamma) \subseteq S^{n-1}$.  Here $S^{n-1}$ is the `boundary at infinity' of hyperbolic space.  A Kleinian group is called \emph{non-elementary} if its limit set contains at least 3 points, in which case it is necessarily an uncountable perfect set. 

The \emph{Poincar\'e exponent} captures the coarse rate of accumulation to the limit set and is defined as the exponent of convergence of the \emph{Poincar\'e series} 
\[
P_\Gamma(s) = \sum_{g \in \Gamma } \exp(-sd_H(0,g(0))) = \sum_{g \in \Gamma} \left(\frac{1-|g(0)|}{1+|g(0)|} \right)^s
\]
for $s \geq 0$.  The \emph{Poincar\'e exponent} is therefore
\[
\delta(\Gamma) = \inf\{ s \geq 0 : P_\Gamma(s) <\infty\}.
\]
It is a simple exercise to show that the \emph{Poincar\'e series}  may be defined using the orbit of an arbitrary $z \in \mathbb{D}^n$  at the expense of multiplicative constants depending only on $z$.  In particular, the exponent of convergence does not depend on the choice of $z$. (The definition above uses $z=0$.) For more background on hyperbolic geometry and Kleinian groups see \cite{beardon, maskit}.

There has been a great deal of interest in computing or estimating the fractal dimension of the limit set $L(\Gamma)$  (as a subset of Euclidean space $\mathbb{R}^n$) and the Poincar\'e exponent plays a central role.  We write $\hd, \, \ubd, \, \ad$ to denote the Hausdorff, upper box, and Assouad dimensions respectively.  These constitute three distinct and well-studied notions of fractal dimension. See \cite{falconer} for more background on dimension theory and fractal geometry, especially the box and Hausdorff dimensions, and \cite{book} for the Assouad dimension.  For all non-empty  bounded sets $F \subseteq \mathbb{R}^n$,
\[
0 \leq \hd F \leq \ubd F \leq \ad F \leq n.
\]
For all non-elementary Kleinian groups,  
\[
\delta(\Gamma) \leq \hd L(\Gamma)
\]
and for non-elementary \emph{geometrically finite}  Kleinian groups,  
\[
\delta(\Gamma) = \hd L(\Gamma) = \ubd L(\Gamma).
\]
See \cite{bowditch} for more details on geometric finiteness.  Roughly speaking it means that the Kleinian group admits a reasonable fundamental domain. The equality of Hausdorff dimension and Poincar\'e exponent in the geometrically finite setting  goes back to Sullivan \cite{sullivan}, see also Patterson  \cite{patterson}.  The coincidence with box dimension in this case was proved (rather later) independently by  Bishop and Jones \cite{bishopjones} and Stratmann and Urba\'nski \cite{su}.  The fact that the Poincar\'e exponent is always a lower bound for the Hausdorff dimension (without the assumption of geometric finiteness) is due to Bishop and Jones \cite{bishopjones}.  See the survey \cite{stratmann}.  In the presence of parabolic elements the Assouad dimension can be strictly greater than $\delta(\Gamma) $, even in the geometrically finite situation, see \cite{kleinian}. 

In the geometrically infinite setting, $\delta(\Gamma) < \hd L(\Gamma) < \ubd L(\Gamma)$ is possible, and it is an intriguing open problem to determine if $\hd L(\Gamma) = \ubd L(\Gamma)$ for all finitely generated $\Gamma$ for $n \geq 4$.  For $n = 3$, Bishop and Jones proved that for finitely generated, geometrically infinite $\Gamma$, $\hd L(\Gamma) = \ubd L(\Gamma)=2$, see \cite{bishopjones}.   This result was extended by Bishop to \emph{analytically finite} $\Gamma$ \cite{bishop, bishopinvent}. Falk and Matsuzaki characterised the upper box dimension of an arbitrary non-elementary Kleinian group in terms of the \emph{convex core entropy} \cite{falk}.  This can also be expressed as the exponent of convergence of an `extended Poincar\'e series', but is more complicated to introduce.  

  Proving the  general inequality $\delta(\Gamma) \leq \hd L(\Gamma)$  involves carefully constructing a measure supported on the limit set and applying the mass distribution principle.  Our investigation  begins with the following question: since (upper) box dimension is a simpler concept than Hausdorff dimension, can we prove the weaker inequality $\delta(\Gamma) \leq \ubd L(\Gamma)$ using only elementary methods?  We provide a short and self-contained proof of this estimate in the sections which follow.  It is instructive to think about why our  proof fails to prove the equality $\delta(\Gamma) = \ubd L(\Gamma)$ in general and what sort of extra assumptions on $\Gamma$ would be needed to  `upgrade' the proof to yield this stronger conclusion.

The (upper) box dimension of a non-empty bounded set $F \subseteq \mathbb{R}^n$ can be defined in terms of the asymptotic behaviour of the volume of the $r$-neighbourhood of $F$.  Given $r>0$ the $r$-neighbourhood of $F$ is denoted by $F_r$ and consists of all points in $\mathbb{R}^n$ which are at Euclidean distance less than or equal to $r$ from a point in $F$. Write $V_E$ to denote the Euclidean volume, that is, $n$-dimensional Lebesgue measure.  If $V_E(F) =0$, then $V_E(F_r) \to 0$ as $r \to 0$.  The upper box dimension   of $F$ captures this rate of decay and is defined formally by
\[
\ubd F = n-\liminf_{r \to 0} \frac{\log V_E(F_r) }{\log r}.
\] 
Another elementary proof of the estimate $\delta(\Gamma) \leq \ubd L(\Gamma)$, at least for $n=2,3$, can be found in \cite[Lemmas 2.1 and 3.1]{bishop}.  Here the connection is made via `Whitney squares'.

\section{Proof of dimension estimate}

Let $\Gamma$ be an arbitrary non-elementary Kleinian  group acting on the Poincar\'e ball and $\delta(\Gamma)$ denote the associated  Poincar\'e exponent.   We prove the following (well-known)  inequality:
\begin{equation} \label{main}
\delta(\Gamma) \leq \ubd L(\Gamma).
\end{equation}

Throughout we write $A \lesssim B$ to mean there is a constant $c >0$ such that $A \leq cB$.  Similarly, we write $A \gtrsim B$ if $B \lesssim A$ and $A \approx B$ if $A \lesssim B$ and $A \gtrsim B$.  The implicit constants may depend on $\Gamma$  and other fixed parameters, but it will be crucial that they never depend on the scale $r>0$ used to compute the box dimension or on a specific element $g \in \Gamma$.

\subsection{Elementary estimates from hyperbolic geometry}

 Since $\Gamma$ is non-elementary, it is easy to see that it must contain a loxodromic element, $h$. Loxodromic elements have precisely two fixed points on the boundary at infinity.  Let $z \in \mathbb{D}^{n}$ be a point lying on the (doubly infinite) geodesic ray joining the fixed points of $h$.  We may assume $z$ is not fixed by any elliptic elements in $\Gamma$ since it is an elementary fact that the set of elliptic fixed points is discrete.  Choose $a>0$ such that the set
\[
\{ B_{H}(g(z),a)\}_{g \in \Gamma}
\]
is pairwise disjoint, where $B_H(g(z),a)$ denotes the closed hyperbolic ball centred at $g(z)$ with radius $a$.  To see that such an $a$ exists  recall that  the orbit $\Gamma(z)$ is locally finite.  As such, $a$ can be chosen such that $B_{H}(z,2a)$ contains only one point from the orbit $\Gamma(z)$, namely $z$ itself.  Then the pairwise disjointness of the collection $\{ B_{H}(g(z),a)\}_{g \in \Gamma}$ is guaranteed since if $y \in B_{H}(g_1(z), a) \cap  B_{H}(g_2(z), a) $ for distinct $g_1,g_2 \in \Gamma$, then 
\[
d_{H}(z, g_1^{-1}g_2(z)) = d_{H}(g_1(z), g_2(z))  \leq d_{H}(g_1(z), y)+d_{H}(y, g_2(z)) \leq 2a
\]
which gives $g_1^{-1}g_2(z) \in B_{H}(z,2a)$, a contradiction.  

We will use  the simple volume estimate 
\begin{equation} \label{size}
V_E(B_{H}(g(z),a)) \approx  (1-|g(z)|)^n
\end{equation}
for  all $g \in \conp$, where the implicit constants  are independent of $g$ and $z$, but depend on $a$ and $n$.  This follows since  $B_{H}(g(z),a)$ is a \emph{Euclidean} ball with diameter  comparable to $1-|g(z)|$ (most likely not centred at $g(z)$).  To derive this explicitly it is useful to recall the (well-known and easily derived) formula for hyperbolic distance
\[
d_H(0,w) = \log \frac{1+|w|}{1-|w|}, \qquad (w \in \mathbb{D}^n).
\]
 The next result says that if $1-|g(z)|$ is small, then the image of a fixed set under $g$  must be contained in a comparably  small neighbourhood of the limit set.  This is the only point in the proof where the fact that the group is non-elementary is used.  It is instructive to find an example of an elementary group where the conclusion fails.  

\begin{lma} \label{volume}
Let $a,z$ be as above.  There exists a  constant $c>0$ depending only on $\Gamma$, $a$ and $z$ such that if $g \in \Gamma$ is such that $1-|g(z)|< 2^{-k+1}$ for a positive integer $k$, then
\[
B_{H}(g(z),a)  \subseteq L(\Gamma)_{c2^{-k}}.
\]
\end{lma}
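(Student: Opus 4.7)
The plan is to reduce the lemma to finding a single limit point of $\Gamma$ within Euclidean distance $\lesssim 2^{-k}$ of $g(z)$. By the size estimate \eqref{size}, $B_H(g(z),a)$ is a Euclidean ball of diameter $\approx 1-|g(z)| \lesssim 2^{-k}$, so any such limit point $\eta$ automatically gives $B_H(g(z),a) \subseteq L(\Gamma)_{c2^{-k}}$ for a suitable constant $c$ depending on $\Gamma$, $a$, and $z$.

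Non-elementarity is used precisely here. Fix three distinct $\xi_1, \xi_2, \xi_3 \in L(\Gamma)$, pairwise Euclidean-separated by some $\delta_0 > 0$. By $\Gamma$-invariance of $L(\Gamma)$, each $g(\xi_i) \in L(\Gamma)$, so it suffices to produce an index $i$ with $|g(z) - g(\xi_i)| \lesssim 1-|g(z)|$.

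The main ingredient is the conformal identity for M\"obius transformations,
\[
|g(w_1) - g(w_2)|^2 = |g'(w_1)|\, |g'(w_2)|\, |w_1 - w_2|^2, \qquad w_1, w_2 \in \overline{\mathbb{D}^n},
\]
together with $|g'(z)| = (1-|g(z)|^2)/(1-|z|^2) \approx 1-|g(z)|$, which holds because $g$ is a hyperbolic isometry. Substituting $w_1 = z$, $w_2 = \xi_i$ and bounding $|z - \xi_i| \leq 2$ reduces the desired bound $|g(z) - g(\xi_i)| \lesssim 1-|g(z)|$ to the derivative estimate $|g'(\xi_i)| \lesssim 1-|g(z)|$ for at least one $i$.

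For $g \in \conp$, the function $|g'|$ on $S^{n-1}$ is peaked at the ``pole direction'' $\beta = g^{-1}(0)/|g^{-1}(0)|$ and satisfies $|g'(\xi)| \lesssim (1-|g(z)|)/|\xi - \beta|^2$. By pigeonhole, at most one of the $\delta_0$-separated points $\xi_i$ can lie within Euclidean distance $\delta_0/2$ of $\beta$, so at least one $\xi_i$ has $|g'(\xi_i)| \lesssim 1-|g(z)|$ with an implicit constant depending only on $\delta_0$. The main obstacle is establishing this derivative bound cleanly in general dimension: in $n=2$ it is immediate from the explicit formula $|g'(w)| = (1-|\alpha|^2)/|1-\bar \alpha w|^2$ with $\alpha = g^{-1}(0)$, and the general case follows by reducing via an orthogonal transformation (and the formula for the derivative of an inversion in a sphere) to this two-dimensional picture. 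The complementary case $|g^{-1}(0)|$ bounded away from $1$ is trivial, since then $1-|g(z)|$ is bounded below and the hypothesis restricts to finitely many relevant $k$.
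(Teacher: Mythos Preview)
Your argument is correct, but it takes a genuinely different route from the paper's proof. The paper exploits the specific choice of $z$ on the axis of the loxodromic element $h$: then $g(z)$ lies on the geodesic joining the fixed points of $ghg^{-1}$, and since geodesics meet $S^{n-1}$ orthogonally, at least one of these (loxodromic, hence limit) fixed points lies within Euclidean distance $\lesssim 1-|g(z)|$ of $g(z)$. This is a two--line geometric observation with no analysis of derivatives.

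Your approach instead uses the M\"obius chordal distortion identity and the boundary derivative formula $|g'(\xi)|=(1-|\alpha|^2)/|\xi-\alpha|^2$ with $\alpha=g^{-1}(0)$ to show that, among any three $\delta_0$--separated limit points, some $g(\xi_i)$ lands within $\lesssim 1-|g(z)|$ of $g(z)$. Two remarks: first, the detour through the normalised pole $\beta=\alpha/|\alpha|$ and the separate treatment of $|\alpha|$ bounded away from $1$ are unnecessary --- the pigeonhole already gives some $\xi_i$ with $|\xi_i-\alpha|\geq\delta_0/2$ directly, and then $|g'(\xi_i)|\lesssim(1-|\alpha|)\approx 1-|g(z)|$ without further cases. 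Second, the higher--dimensional derivative formula is not really a ``reduction to $n=2$'': it holds verbatim for the canonical involution exchanging $0$ and $\alpha$ in $\mathbb{D}^n$ (e.g.\ via the inversion formula $|g'(x)|=r^2/|x-a|^2$), so you can simply quote it. What your approach buys is that it does \emph{not} use the special placement of $z$ on a loxodromic axis --- it works for any basepoint --- whereas the paper's argument is shorter and more geometric but is tied to that choice.
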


\begin{proof}
The idea is that there must be a loxodromic fixed point close to $g(z)$ and loxodromic fixed points are necessarily in the limit set.  Indeed, $g(z)$ lies on the geodesic ray joining the  fixed points of the loxodromic map $g h g^{-1}$.  These fixed points are the images of the fixed points of $h$ under $g$ and at least one of them must lie in the smallest Euclidean sphere passing through $g(z)$ and intersecting the boundary $S^{n-1}$ at right angles. This uses the fact that geodesic rays are orthogonal to the boundary and $g$ is conformal. The diameter of this  sphere is
\[
\lesssim  1-|g(z)|  < 2^{-k+1}
\]
and the result follows, recalling that the Euclidean diameter of $B_{H}(g(z),a)$ is $\approx 1-|g(z)| $.
\end{proof}

\subsection{Estimating the Poinca\'re series using the limit set }

Let $s>t>\ubd L(\Gamma)$.  Then by definition
\begin{equation} \label{box}
V_E(L(\Gamma)_{r}) \lesssim r^{n-t}
\end{equation}
for all $0<r<c/2$ with implicit constant independent of $r$ but depending on $t$ and where $c$ is the constant from Lemma \ref{volume}. Then
\begin{eqnarray*}
P_\Gamma(s) &\approx& \sum_{g \in \Gamma} \left(\frac{1-|g(z)|}{1+|g(z)|} \right)^s \\ \\
&\approx& \sum_{k=1}^\infty \sum_{\substack{g \in \Gamma: \\2^{-k} \leq 1-|g(z)| < 2^{-k+1}}} (1-|g(z)|)^s \\ \\
 &\approx& \sum_{k=1}^\infty 2^{-k(s-n)} \sum_{\substack{g \in \Gamma: \\2^{-k} \leq 1-|g(z)| < 2^{-k+1}}} (1-|g(z)|)^n \\ \\
 &\lesssim& \sum_{k=1}^\infty 2^{-k(s-n)} \sum_{\substack{g \in \Gamma: \\2^{-k} \leq 1-|g(z)| < 2^{-k+1}}} V_E(B_{H}(g(z),a)) \qquad \text{(by \eqref{size})} \\ \\
 &\lesssim& \sum_{k=1}^\infty 2^{-k(s-n)}  V_E(L(\Gamma)_{c2^{-k}}) \qquad \text{(by Lemma \ref{volume} and choice of $a$)}\\ \\
 &\lesssim& \sum_{k=1}^\infty 2^{-k(s-n)} 2^{-k(n-t)} \qquad \text{(by \eqref{box})} \\ \\
 &=& \sum_{k=1}^\infty 2^{-k(s-t)}\\ \\
&<&\infty.
\end{eqnarray*}
Therefore  $\delta(\Gamma) \leq s$ and letting $s \to \ubd L(\Gamma)$ proves   \eqref{main}.

\end{document}